
\documentclass[12pt]{amsart}
\usepackage{amsfonts}
\usepackage{amsmath}
\usepackage{amssymb}
\usepackage[margin=1.4in]{geometry}
\usepackage{verbatim}
\setcounter{MaxMatrixCols}{30}


\newtheorem{theorem}{Theorem}[section]
\newtheorem{corollary}[theorem]{Corollary}
\newtheorem{lemma}[theorem]{Lemma}

\theoremstyle{definition}

\newtheorem{example}[theorem]{Example}

\renewcommand{\a}{{\alpha}}
\renewcommand{\b}{{\beta}}

\newcommand{\g}{{\gamma}}

\renewcommand{\ss}{{\sigma_o}}

\newcommand{\cP}{\mathcal{P}}
\newcommand{\cN}{\mathcal{N}}

\numberwithin{equation}{section}

\begin{document}
\title{$M(x)=o(x)$ Estimates for Beurling numbers}

\author[G.~Debruyne]{Gregory Debruyne}
\thanks{G.~Debruyne gratefully acknowledges support by Ghent
  University, through a BOF Ph.D. grant} 
\address{G.~Debruyne\\ Department of Mathematics\\ Ghent University\\
  Krijgslaan 281\\ B 9000 Ghent\\ Belgium} 
\email{gdbruyne@cage.UGent.be}

\author[H.~G. Diamond]{Harold G. Diamond}
\address{H.~G.~Diamond\\ Department of Mathematics\\ University of
  Illinois\\ 1409 W. Green St.\\ Urbana IL 61801\\ U.S.A.} 
\email{diamond@math.uiuc.edu}

\author[J.~Vindas]{Jasson Vindas} 
\thanks{The work of J. Vindas was supported by the Research
  Foundation--Flanders, through the FWO-grant number 1520515N} 
\address{J.~Vindas\\ Department of Mathematics\\ Ghent University\\
  Krijgslaan 281\\ B 9000 Ghent\\ Belgium} 
\email{jvindas@cage.UGent.be}

\subjclass[2010]{Primary 11N80; Secondary 11M41}
\keywords{Beurling generalized numbers; mean-value vanishing of the
  Moebius function;  Chebyshev bounds; zeta function; prime number
  theorem; PNT equivalences} 

\begin{abstract}
In classical prime number theory there are several asymptotic
formulas said to be ``equivalent'' to the PNT.  One is the bound $M(x)
= o(x)$ for the sum function of the Moebius function.  For Beurling
generalized numbers, this estimate is not an unconditional consequence
of the PNT. Here we give two conditions that yield the Beurling
version of the $M(x)$ bound, and examples illustrating failures
when these conditions are not satisfied.


\end{abstract}

\maketitle

\section{Introduction}  \label{sec:intro}

Let $\mu(n)$ denote the Moebius arithmetic function and $M(x)$ its sum
function. Von Mangoldt first established the estimate $M(x) = o(x)$,
essentially going through the steps used in proving the Prime Number
Theorem (PNT).  A few years later, Landau showed by relatively simple
real variable arguments that this and several other estimates followed
from the PNT \cite[\S150]{landau}.  Similarly, these relations imply each
other and the PNT; thus they are said to be ``equivalent'' to the PNT.

In this article we consider an analog of the $M$ bound for Beurling
generalized (g-)numbers.  A Beurling g-prime system is an unbounded
sequence of real numbers ${\cP} =\{p_i\}$ satisfying $1 < p_1 \le p_2
\le \ldots\,$, and the multiplicative semigroup generated by $\cP$ and
1 is called the associated collection $\cN$ of g-integers
(cf.~\cite{BD69}, \cite{diamond-zhangbook}, \cite{MV07}).  The
counting function $N(x)$ of $\cN$ is the number of elements of $\cN$
not exceeding $x$.  The g-Chebyshev function of $\cN$ is
\[
\psi(x) = \sum_{p_i^{\alpha_i} \le x} \log p_i.
\]
(Here and below we give our Beurling functions the same names used in
classical number theory.)

The PNT-related assertions are somewhat different  
for g-numbers: not all implications between the several corresponding
assertions hold unconditionally, see e.g.~\cite{d-vPNTequiv2016},
\cite[Chap.~14]{diamond-zhangbook}, \cite{zhang1987}.  Here we study
the Beurling version of the assertion $M(x) = o(x)$ and show this can
be deduced (a) from the PNT under an O-boundedness condition
$N(x)=O(x)$ or (b) from a Chebyshev-type upper bound assuming
$N(x)\sim ax$ (for $a>0$) and the integral condition \eqref{L1cond}
(below).  At the end, we give examples in which $M$ estimates fail.

A word about the Beurling version of $M(x)$.  The characteristic
property of the Moebius function is its being the (multiplicative)
convolution inverse of the $1$ function.  For g-numbers, we define the
measure $\mathrm{d}M$ as the convolution inverse of $\mathrm{d}N$; by familiar Mellin
transform properties (cf. \cite{BD04}, \cite{diamond-zhangbook},
\cite{MV07}), 
\[
 \int_{1-}^\infty u^{-s} \mathrm{d}M(u) = 1/\zeta(s) \ \, {\rm with} \ \,
\zeta(s) = \zeta_{\cN}(s) = \int_{1-}^\infty u^{-s} \mathrm{d}N(u),
\]
the Beurling number version of the Riemann zeta function. Note that
these definitions make sense even when $\mathrm{d}N$ is not discrete or when
factorization into primes is not unique.
\smallskip

We thank Wen-Bin Zhang for his useful comments.

\section{PNT hypothesis}  \label{sec:PNT}

In this section we show that the PNT together with the ``O-density''
condition $N(x) \ll x$ implies $M(x) = o(x)$.
\begin{theorem}  \label{th:M}
Let $\cN$ be a g-number system for which the PNT holds and $N(x) \ll
x$.   Then $M(x) = o(x)$. 
\end{theorem}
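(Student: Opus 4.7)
My plan is to derive a log-differentiated convolution identity tying $M$ to $\psi$, then substitute the PNT asymptotic and control the resulting error term by an Axer-type estimate that exploits the density hypothesis $N(x)\ll x$.

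\smallskip

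\emph{Step 1 (Master identity).} Differentiating the Mellin transform relation $\widehat M(s) = 1/\zeta(s)$ gives $-\widehat{L_M}(s) = \widehat\psi(s)\,\widehat M(s)$, where $L_M(x)=\int_{1-}^x \log u\, \mathrm{d}M(u)$. In integrated form this is $-L_M(x) = (\psi\ast M)(x)$. Since $L_M(x) = M(x)\log x - \int_1^x M(u)/u\, \mathrm{d}u$ by a routine Stieltjes integration by parts, I obtain the master identity
\[
 M(x)\log x + \int_{1-}^{x} M(x/v)\, \mathrm{d}\psi(v) \;=\; \int_{1}^{x} \frac{M(u)}{u}\, \mathrm{d}u.
\]

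\emph{Step 2 (Insert PNT).} Writing $\psi(v) = v + R(v)$ with $R(v)=o(v)$, and using the change of variable $u=x/v$ to get $\int_1^x M(x/v)\,\mathrm{d}v = x\int_1^x M(u)/u^2\,\mathrm{d}u$, the master identity becomes
\[
 M(x)\log x \;=\; \int_{1}^{x}\! M(u)\!\left(\frac{1}{u}-\frac{x}{u^{2}}\right) \mathrm{d}u \;-\; \int_{1-}^{x} M(x/v)\, \mathrm{d}R(v).
\]
The task is to show each piece on the right is $o(x\log x)$.

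\smallskip

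\emph{Step 3 (Control via $N(x)\ll x$).} The density hypothesis enters in two places. First, one extracts a crude bound $|M(x)|\ll x$ (or, if necessary, $|\mathrm{d}M|([1,x])\ll x$, possibly with a logarithmic loss) from $N(x)\ll x$, using the recursive relation $\mathrm{d}M = \delta_1 - \mathrm{d}M\ast(\mathrm{d}N-\delta_1)$; this renders the bracketed integral $O(x)$ and hence negligible against $x\log x$. Second, for the ``Axer convolution'' $\int M(x/v)\,\mathrm{d}R(v)$, I would split the $v$-range at $v = x/T$ for a large parameter $T$: on $1\le v \le x/T$ the ratio $R(x/v)/(x/v)$ is uniformly small because $x/v \ge T$ is large, which by a second integration by parts (against $\mathrm{d}M$) and the total-variation bound contributes $o(x\log x)$; on $x/T \le v\le x$ the factor $M(x/v)$ is bounded in absolute value by crude Chebyshev-type bounds, and the short-range contribution is $O(x)$ after using $N(x)\ll x$. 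Letting $T\to\infty$ slowly then yields the claim.

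\smallskip

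\emph{Main obstacle.} The delicate point is the Axer-type estimate for $\int M(x/v)\,\mathrm{d}R(v)$. In the Beurling setting the signed measure $\mathrm{d}M$ need not have total-variation well controlled by $N(x)\ll x$ in an obvious way, so the correct quantitative form of a Beurling Axer lemma—tight enough to absorb $R(v)=o(v)$ uniformly on a multiplicative scale—is the crux. Everything else reduces to careful bookkeeping with Stieltjes integration by parts and the change of variable $u = x/v$.
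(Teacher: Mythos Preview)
Your Steps 1--2 are fine and coincide with the paper's setup: both derive, from the Chebyshev-type identity $L\,\mathrm{d}M = -\mathrm{d}M\ast\mathrm{d}\psi$, the relation
\[
M(x)\log x \;=\; \int_{1}^{x} \frac{M(u)}{u}\,\mathrm{d}u \;-\; x\int_{1}^{x}\frac{M(u)}{u^{2}}\,\mathrm{d}u \;-\; \int_{1-}^{x} M(x/v)\,\mathrm{d}R(v).
\]
The genuine gap is in Step~3. You assert that the ``bracketed integral'' $\int_{1}^{x} M(u)\,(1/u - x/u^{2})\,\mathrm{d}u$ is $O(x)$ once $|M(u)|\ll u$. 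This is false: the term $x\int_{1}^{x} M(u)/u^{2}\,\mathrm{d}u$ is only $O(x\log x)$ under that bound, not $O(x)$, and hence is \emph{not} negligible against $x\log x$. Worse, to make it $o(x\log x)$ you would need $M(u)=o(u)$, which is exactly the conclusion you are after. So the argument as written is circular at this point. (Incidentally, $|M|\le N\ll x$ follows in one line from $|\mathrm{d}M|\le \mathrm{d}N$; the recursion you propose is unnecessary.)

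The paper resolves precisely this difficulty. It first shows your Axer term is $o(x\log x)$---and does so painlessly by writing the convolution the other way, $\int_{1-}^{x}\{x/t-\psi(x/t)\}\,\mathrm{d}M(t)$, and invoking $|\mathrm{d}M|\le \mathrm{d}N$ together with $N(x)\ll x$, so no delicate Beurling--Axer lemma is needed. This reduces matters to the \emph{implicit} relation
\[
\frac{M(x)}{x} \;=\; -\,\frac{1}{\log x}\int_{1}^{x}\frac{M(t)}{t^{2}}\,\mathrm{d}t \;+\; o(1),
\]
in which the right side is still only $O(1)$ a priori. The missing idea is then a self-improving sign argument: if $M$ is eventually of one sign, the two terms on the left of $M(x)/x + (\log x)^{-1}\int_{1}^{x} M(t)/t^{2}\,\mathrm{d}t = o(1)$ have the same sign, forcing each to be $o(1)$; otherwise one bootstraps from sign-change points (where $M(x)/x$ is automatically small) across intervals of constant sign. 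Your outline is missing this step entirely, and without it the proof does not close.
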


Other sufficient conditions for an $M$ estimate are known, 
e.g.~\cite[ Prop.~14.10]{diamond-zhangbook}, if the PNT holds for a
g-number system and the integer counting function satisfies the
logarithmic density condition
\begin{equation}
\label{log-density}
\int_{1^{-}}^{x}\frac{\mathrm{d}N(t)}{t}\sim a\log x,
\end{equation}
then $M(x) = o(x)$.  The present result differs from the other in that
log-density and O-density are conditions that do not imply one another;
also, the proofs are very different.

The key to our argument is the following relation.
\begin{lemma} \label{lem:key}
Under the hypotheses of the theorem,
\begin{equation}  \label{eq:main-ident}
\frac{M(x)}{x} = \frac {-1}{\log x}\int_1^x \!\frac{M(t)}{t^2}\,
\mathrm{d}t + o(1). 
\end{equation}
\end{lemma}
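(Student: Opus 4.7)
The plan is to derive a closed-form convolution identity relating $M$ to $\psi$, substitute the PNT asymptotic $\psi(y) = y + o(y)$, and reduce the lemma to an estimate on a single error integral. The starting point is the measure identity $(\log u)\,\mathrm{d}M(u) = -(\mathrm{d}M \ast \mathrm{d}\psi)(u)$, which follows immediately from the fact that both sides have Mellin transform $\zeta'(s)/\zeta(s)^2$ (up to sign). Integrating over $[1,x]$ and applying integration by parts on the left yields the fundamental identity
\begin{equation*}
M(x)\log x - \int_1^x\frac{M(u)}{u}\,\mathrm{d}u = -\int_1^x \psi(x/v)\,\mathrm{d}M(v).
\end{equation*}

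Next I substitute $\psi(y) = y + r(y)$ with $r(y) = o(y)$. The main piece on the right simplifies via integration by parts as $\int_1^x (x/v)\,\mathrm{d}M(v) = M(x) + x\int_1^x M(t)/t^2\,\mathrm{d}t$, and the identity above becomes
\begin{equation*}
M(x)(\log x+1) + x\int_1^x\frac{M(t)}{t^2}\,\mathrm{d}t = \int_1^x\frac{M(u)}{u}\,\mathrm{d}u - \int_1^x r(x/v)\,\mathrm{d}M(v).
\end{equation*}
Dividing through by $x\log x$ and moving $M(x)/(x\log x)$ to the right, the lemma reduces to verifying that the three quantities $M(x)/(x\log x)$, $(x\log x)^{-1}\int_1^x M(u)/u\,\mathrm{d}u$, and $(x\log x)^{-1}\int_1^x r(x/v)\,\mathrm{d}M(v)$ are each $o(1)$.

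The first two are handled by an a priori linear bound $M(x) = O(x)$ together with its total-variation companion $|\mathrm{d}M|[1,x] = O(x)$; such bounds can be secured under the hypotheses by a Chebyshev-type argument applied to the recursive relation $M(x) = 1 - \int_{1+}^x M(x/v)\,\mathrm{d}N(v)$. The real work is showing $\int_1^x r(x/v)\,\mathrm{d}M(v) = o(x\log x)$. To do this I split at $v = x/A$ for a large parameter $A$: on $(x/A, x]$, $r(x/v)$ is bounded by a constant $C_A$, contributing $O_A(x)$; on $[1, x/A]$, $|r(x/v)| \le \varepsilon(A)(x/v)$ with $\varepsilon(A) \to 0$ as $A \to \infty$, and Abel summation together with the total-variation bound yields $\int_1^{x/A} v^{-1}|\mathrm{d}M|(v) = O(\log x)$, so this part contributes $O(\varepsilon(A)\,x\log x)$. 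Sending $x \to \infty$ first and then $A \to \infty$ completes the estimate. The chief technical obstacle I anticipate is establishing the a priori total-variation bound $|\mathrm{d}M|[1,x] = O(x)$, which is not automatic for Beurling systems from just $N(x) \ll x$ and PNT and needs its own justification before the splitting argument can be run.
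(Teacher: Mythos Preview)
Your approach is essentially the paper's: the same Chebyshev-type identity $(\log u)\,\mathrm{d}M=-\mathrm{d}M\ast\mathrm{d}\psi$, the same integration by parts, and the same substitution $\psi(y)=y+o(y)$. The paper even writes out the decomposition $\int_{1^-}^x\{x/t-\psi(x/t)\}\,\mathrm{d}M(t)=o(x\log x)$ in exactly the spirit of your $A$-splitting.

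The one place you are making life harder than necessary is the a~priori bound on $M$ and on its total variation. You do \emph{not} need a recursive Chebyshev argument. For any Beurling system one has the pointwise inequality of measures
\[
|\mathrm{d}M|\;=\;\Big|\sum_{n\ge 0}\frac{(-1)^n}{n!}\,\mathrm{d}\Pi^{\ast n}\Big|\;\le\;\sum_{n\ge 0}\frac{1}{n!}\,\mathrm{d}\Pi^{\ast n}\;=\;\mathrm{d}N,
\]
since $\mathrm{d}\Pi\ge 0$. Hence $|M(x)|\le N(x)\ll x$ and $|\mathrm{d}M|[1,x]\le N(x)\ll x$ are immediate from the hypothesis $N(x)\ll x$. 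The paper invokes exactly this (``since $|\mathrm{d}M|\le\mathrm{d}N$''), and once you insert it your ``chief technical obstacle'' disappears: the $C_A$ tail is $O_A(x)$ and $\int_1^{x/A}v^{-1}\,|\mathrm{d}M|(v)\le\int_1^{x}v^{-1}\,\mathrm{d}N(v)=O(\log x)$ by partial summation, so the $r$-integral is $o(x\log x)$ as you planned.
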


\begin{proof}[Proof of the lemma]
A variant of Chebyshev's identity for primes reads
\[
L\mathrm{d}M =  -\mathrm{d}M * \mathrm{d}\psi,
\]
where $L$ is the operator of multiplication by $\log t$ and $*$ is
multiplicative convolution.  This can be verified (in the classical or
in the Beurling case) via a Mellin transform.  Note that this
transform carries convolutions into pointwise products and the $L$
operator into differentiation.  The equivalent Mellin formula is the
identity
\[
\Big\{\frac{1}{\zeta(s)} \Big\}' = -\frac{1}{\zeta(s)} \cdot
\frac {\zeta'(s)}{\zeta(s)}.
\]

Now add and subtract the term $(\delta_1+\mathrm{d}t)*\mathrm{d}M$ in
the variant of the
Chebyshev relation, with $\delta_1$ the point mass at 1 and $\mathrm{d}t$
the Lebesgue measure on $(1,\, \infty)$.  Integrating, we find
\begin{equation}  \label{eq:Cheb'}
\int_{1-}^x L\,\mathrm{d}M =  \int_{1-}^x\mathrm{d}M *
(\delta_1+\mathrm{d}t - \mathrm{d}\psi) - \int_{1-}^x\mathrm{d}M 
* (\delta_1+\mathrm{d}t).
\end{equation}

Integrating by parts the left side of the last formula, we get
\[
M(x) \log x - \int_1^x \frac{M(t)}{t}\, \mathrm{d}t = M(x) \log x + O(x),
\]
since $|\mathrm{d}M| \le \mathrm{d}N$, and hence $|M(t)| \le N(t) \ll
t$.  If we evaluate the convolution integrals by the iterated integral
formula
\[
 \int_{1-}^x\mathrm{d}A * \mathrm{d}B = \int\!\!\!\int_{st \le x}\!
 \mathrm{d}A(s)\, \mathrm{d}B(t) = \int_{1-}^xA(x/t)\, \mathrm{d}B(t)
\]
and use the PNT, we find the first term on the right side of
\eqref{eq:Cheb'} to be 
\[
\int_{1-}^x \{x/t-\psi(x/t)\}\,\mathrm{d}M(t) = 
\int_{1-}^x o(x/t)\,\mathrm{d}N(t) = o(x \log x).
\]
Also, the last term of \eqref{eq:Cheb'} is, upon integrating by parts,
\[
\int_{1-}^x \frac xt \, \mathrm{d}M(t) = M(x) + x\!\int_1^x
\!\frac{M(t)}{t^2}\,\mathrm{d}t. 
\]

Finally, we combine the bounds for the terms of \eqref{eq:Cheb'},
divide through by $x \log x$, and note that $M(x) \ll x$ to obtain
\eqref{eq:main-ident}.
\end{proof}

\begin{proof}[Proof of Theorem \ref{th:M}]
First, we can assume that $M(x)$ has an infinite number of sign
changes.  Otherwise, there is some number $z$ such that $M(x)$ is of
one sign for all $x \ge z$.  By \eqref{eq:main-ident}, as $x \to \infty$,
\[
\frac{M(x)}{x}
= \frac {-1}{\log x}\, \int_1^z
\!\frac{O(t)}{t^2}\,\mathrm{d}t -\frac{1}{\log x}\int_z^x
\!\frac{M(t)}{t^2}\,\mathrm{d}t  + o(1).
\]
Thus we have
\[
\frac{M(x)}{x} 
+ \frac{1}{\log x}\int_z^x \!\frac{M(t)}{t^2}\,\mathrm{d}t
=  \frac {O(\log z)}{\log x} +o(1) = o(1).
\]
Since $M(x)$ and the integral are of the same sign, $M(x)/x \to 0$ 
as $x \to \infty$, and this case is done.

Now suppose that $M(x)$, which we regard as a right-continuous
function, changes sign at $x$.  We show that $M(x)/x = o(1)$.  If
$M(x)/x = 0$, there is nothing more to say here.  If, on the other
hand, $M(x) > 0$, then there is a number $y \in (x-1,\ x)$ with $M(y)
\le 0$.  If we apply \eqref{eq:main-ident} again, we find
\begin{align*}
\frac{M(x)}{x}
&= \frac {-\log y}{\log x}\, \frac{1}{\log y}\int_1^y
\!\frac{M(t)}{t^2}\,\mathrm{d}t -\frac{1}{\log x}\int_y^x
\!\frac{M(t)}{t^2}\,\mathrm{d}t  + o(1) \\
&= \frac {\log y}{\log x}\,\frac{M(y)}{y} + 
\frac{1}{\log x} \int_y^x \frac{O(t)\,\mathrm{d}t}{t^2} +o(1). 
\end{align*}
Thus $M(x)/x - M(y)/y  = o(1)$, and since $M(x)/x> 0 \ge M(y)/y$,
each is $o(1)$.  A similar story holds if $M(x) < 0$.

Finally, suppose that $M(t)$ changes sign at $t=y$ (so that
$M(y)/y = o(1)$) and  $M(t)$ is of one sign 
for $y < t \le z$.  By yet another application of
\eqref{eq:main-ident},  we find for any $x \in (y,\, z]$
\[
\frac{M(x)}{x}
= \frac {-\log y}{\log x}\, \frac{1}{\log y}\int_1^y
\!\frac{M(t)}{t^2}\,\mathrm{d}t -\frac{1}{\log x}\int_y^x
\!\frac{M(t)}{t^2}\,\mathrm{d}t  + o(1)
\]
or
\[
\frac{M(x)}{x} + \frac{1}{\log x}\int_y^x \!\frac{M(t)}{t^2}\,\mathrm{d}t 
= \frac {\log y}{\log x}\,\frac{M(y)}{y} +o(1) = o(1)
\]
as $y \to \infty$.  Since $M(x)/x$ and the integral are of the same
sign, it follows that $M(x)/x = o(1)$.
\end{proof}

\section{Chebyshev hypothesis}  \label{sec:cheb}

What happens if the PNT hypothesis of the last theorem is weakened to
just a Chebyshev upper bound? In the examples of Section \ref{Sect
  examples} we show that even two-sided Chebyshev estimates by
themselves are not strong enough to ensure that $M(x) = o(x)$ holds.
Furthermore, this bound could fail even if, in addition to Chebyshev
estimates, one also assumes that $N$ satisfies both
(\ref{log-density}) and $N(x)\ll x$, see Example \ref{ex1C-M}. We
shall show, however, that if the regularity hypothesis on $N$ is
slightly augmented, then one can indeed deduce the desired $M$ bound.

\begin{theorem} \label{thCheby-moebius} Suppose that a g-number
 system satisfies the following conditions: \begin{enumerate}
 \item [(a)] a Chebyshev upper bound, that is,  
\begin{equation}
\label{chebyeq1} \limsup_{x\to\infty} \frac{\psi(x)}{x} < \infty,
\end{equation}
\item [(b)]  for some positive constant $a, \ N(x) \sim ax$,
\item [(c)] for some $\b \in (0,1/2)$ and all $\sigma \in (1,2)$
\begin{equation}
\label{L1cond} \int_{1}^{\infty} \frac{|N(x)-ax|}{x^{\sigma+1}}\:
\mathrm{d}x \ll (\sigma-1)^{-\b}\,.
\end{equation} 
 \end{enumerate}
Then, $M(x)=o(x)$ holds.
\end{theorem}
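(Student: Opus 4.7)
The plan is to reduce the desired estimate to a single convolution bound via an integral identity, and then to use conditions (a) and (c) to secure that bound. Starting from $\mathrm{d}N\ast\mathrm{d}M=\delta_{1}$ and integrating over $[1,x]$ yields $\int_{1-}^{x}N(x/t)\,\mathrm{d}M(t)=1$. Decomposing $N(u)=au+R(u)$ with $R(u)=N(u)-au$, and using the partial-summation identity $\int_{1-}^{x}t^{-1}\,\mathrm{d}M(t)=M(x)/x+\int_{1}^{x}M(t)t^{-2}\,\mathrm{d}t$, I obtain the key identity
\[
aM(x)+ax\int_{1}^{x}\frac{M(t)}{t^{2}}\,\mathrm{d}t+\int_{1-}^{x}R(x/t)\,\mathrm{d}M(t)=1.
\]
Write $G(x):=\int_{1}^{x}M(t)t^{-2}\,\mathrm{d}t$ and $K(x):=\int_{1-}^{x}R(x/t)\,\mathrm{d}M(t)$. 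Once $K(x)=o(x)$ is established, the conclusion is clean: dividing by $ax$ gives $M(x)/x+G(x)=o(1)$, and since $\bigl(xG(x)\bigr)'=M(x)/x+G(x)$, Ces\`aro integration forces $G(x)\to 0$ and hence $M(x)/x\to 0$.

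Thus the crux is to show $K(x)=o(x)$. Conditions (b) and (c) translate to $(\sigma-1)\zeta(\sigma)=a+O\bigl((\sigma-1)^{1-\beta}\bigr)$ for real $\sigma\in(1,2)$, and a short computation via $\int_{1}^{\infty}G(u)u^{-\sigma}\,\mathrm{d}u=1/\bigl((\sigma-1)\sigma\zeta(\sigma)\bigr)$ yields
\[
\int_{1}^{\infty}\frac{K(u)}{u^{\sigma+1}}\,\mathrm{d}u=\frac{1}{\sigma}-\frac{a}{(\sigma-1)\zeta(\sigma)}=O\bigl((\sigma-1)^{1-\beta}\bigr)\qquad(\sigma\to 1^{+}).
\]
This is an averaged $o(x)$-statement on $K$ in Mellin space.

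The main obstacle is converting this averaged Mellin bound into the pointwise estimate $K(x)=o(x)$. The Chebyshev upper bound (a) is essential here: via $|\mathrm{d}M|\le\mathrm{d}N$ and $\psi(x)\ll x$, it provides the regularity on $K$ needed to run a Tauberian-style argument, and the restriction $\beta<1/2$ supplies the quantitative rate compatible with a Plancherel/$L^{2}$ Tauberian framework. A natural concrete route is a dyadic splitting of $K(x)=\int R(x/t)\,\mathrm{d}M(t)$ into a ``large-argument'' portion (where $R(u)=o(u)$ from (b) is exploited) and a ``short'' portion (where (a) controls the $\mathrm{d}M$-mass and (c) controls the $R$-oscillation), with the dyadic cutoff and the exponent $\beta<1/2$ tuned to balance the two contributions at $o(x)$.
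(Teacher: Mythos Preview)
Your reduction is sound: the identity $aM(x)+axG(x)+K(x)=1$ is correct, and the Ces\`aro step deducing $M(x)=o(x)$ from $K(x)=o(x)$ is clean. The gap is in the step you flag yourself, namely proving $K(x)=o(x)$, and neither of your two suggested routes closes it. The dyadic splitting gives only $o(x\log x)$: on the range $1\le t\le x/T$ one has $|R(x/t)|\le\varepsilon\,x/t$ for $T$ large, but then $\int_{1}^{x/T}(x/t)\,|\mathrm{d}M(t)|\le x\int_{1}^{x}t^{-1}\,\mathrm{d}N(t)\asymp x\log x$, not $O(x)$; and on the short range $[x/T,x]$ the $\mathrm{d}M$-mass is $O(x)$ but $|R|$ is only $o(\cdot)$-controlled pointwise, not summably. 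The real-axis Mellin bound $\int_{1}^{\infty}K(u)u^{-\sigma-1}\,\mathrm{d}u=O((\sigma-1)^{1-\beta})$ is also insufficient: a Tauberian passage to $K(x)=o(x)$ would require some one-sided or slow-oscillation condition on $K$, and nothing in (a)--(c) provides that (e.g.\ $K(x)=x\sin(\log x)$ has bounded Abel means but is not $o(x)$). The Chebyshev bound (a) controls $\psi$ and hence $\zeta'/\zeta$, not the regularity of $K$ directly.

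What is actually needed---and what the paper supplies---is control of $1/\zeta(s)$ for \emph{complex} $s$ on the line $\Re s=1+1/\log x$, not just at real $s$. The paper obtains this from two ingredients your sketch omits: (i) the $3$--$4$--$1$ inequality (equivalently, the Euler-product non-negativity of $\mathrm{d}\Pi$), which yields $1/\zeta(\sigma+it)=o(1/(\sigma-1))$ uniformly for $t$ in compacta; and (ii) a Wiener--Wintner comparison $|\mathrm{d}M|\le\mathrm{d}N$ to bound $\int|1/\zeta|^{2}$ over $|t|$ large by $\int|\zeta|^{2}$ near $t=0$. These are combined via Perron inversion and Cauchy--Schwarz on the smoothed object $f(x)=\int_{1}^{x}\bigl(\int_{1}^{u}\log t\,\mathrm{d}M(t)\bigr)u^{-1}\,\mathrm{d}u$, whose Mellin transform is $-(1/\zeta)'/s$; here the Chebyshev bound enters exactly as the $L^{2}$ estimate $\int_{\Re s=\sigma}|\zeta'(s)/(s\zeta(s))|^{2}\,|\mathrm{d}s|\ll(\sigma-1)^{-1}$, and $\beta<1/2$ is used to make $\int_{|t|\le\lambda}|\zeta(\sigma_o+it)|^{2-\eta}\,\mathrm{d}t$ manageable with $\eta=(1-2\beta)/(1-\beta)$. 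Without an input of type (i), there is no mechanism in your outline to rule out right-hand zeros of $\zeta$ on $\Re s=1$, which by the paper's Lemma~\ref{lexC-M} would force $M(x)=\Omega(x)$.
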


As a simple consequence we have an improvement of a result of
W.-B.~Zhang \cite[Cor.~2.5]{zhang1987}:
\begin{corollary}  If a g-number system satisfies \eqref{chebyeq1} and
\[
N(x) - ax \ll x \log ^{-\gamma} x
\]
for some $\gamma > 1/2$, then $M(x)=o(x)$ holds.
\end{corollary}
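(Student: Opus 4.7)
The strategy is to deduce the corollary as a direct consequence of Theorem \ref{thCheby-moebius}, by verifying its three hypotheses under the weaker-looking assumption $N(x)-ax\ll x\log^{-\gamma}x$ with $\gamma>1/2$ combined with the Chebyshev upper bound. Condition (a) is identical to \eqref{chebyeq1} and is assumed. Condition (b), namely $N(x)\sim ax$, follows at once from $N(x)-ax=O(x\log^{-\gamma}x)=o(x)$ since $\gamma>0$. The substantive step is therefore the integral condition (c).

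To verify (c), I would first observe that the hypothesis only becomes weaker as $\gamma$ is decreased (for $x\ge e$ we have $\log^{-\gamma}x\le\log^{-\gamma'}x$ whenever $\gamma\ge\gamma'\ge 1/2$), so without loss of generality one may assume $\gamma\in(1/2,1)$. Split the integral in \eqref{L1cond} at $x=e$. On $[1,e]$ both $N(x)$ and $ax$ are bounded and $x^{\sigma+1}\ge 1$, so that piece contributes $O(1)$ uniformly in $\sigma\in(1,2)$.

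For the tail, plug in $|N(x)-ax|\ll x\log^{-\gamma}x$ and make the substitution $u=(\sigma-1)\log x$:
\[
\int_e^{\infty}\frac{|N(x)-ax|}{x^{\sigma+1}}\,\mathrm{d}x
\ll\int_e^{\infty}\frac{\mathrm{d}x}{x^{\sigma}\log^{\gamma}x}
=(\sigma-1)^{\gamma-1}\int_{\sigma-1}^{\infty}u^{-\gamma}e^{-u}\,\mathrm{d}u.
\]
Since $\gamma<1$, the remaining integral is dominated by the convergent $\Gamma(1-\gamma)=\int_0^{\infty}u^{-\gamma}e^{-u}\,\mathrm{d}u$, which is a constant independent of $\sigma$. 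Hence the whole expression is $O((\sigma-1)^{-(1-\gamma)})$, so \eqref{L1cond} holds with $\beta=1-\gamma\in(0,1/2)$. Theorem \ref{thCheby-moebius} then yields $M(x)=o(x)$.

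The argument is essentially routine; the only point worth flagging is the alignment between the corollary's threshold $\gamma>1/2$ and the theorem's threshold $\beta<1/2$, which is precisely what the substitution produces via $\beta=1-\gamma$ after the reduction to $\gamma\in(1/2,1)$.
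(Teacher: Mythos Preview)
Your proof is correct and follows exactly the intended approach: the paper states the corollary as ``a simple consequence'' of Theorem~\ref{thCheby-moebius} without writing out a proof, and your verification of hypotheses (a)--(c), with $\beta=1-\gamma$ after reducing to $\gamma\in(1/2,1)$, is precisely the routine check the authors leave to the reader.
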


 W.-B. Zhang had conjectured\footnote{Oral communication to the authors.} that a Chebyshev
 bound along with the $L^{1}$ bound 
\begin{equation}
\label{strongL1cond} \int_{1}^{\infty} \frac{|N(x)-ax|}{x^{2}}
\mathrm{d}x <\infty
\end{equation} 
imply $M(x) = o(x)$.  Naturally, (\ref{strongL1cond}) is included in
(\ref{L1cond}) and $N(x)\sim ax$. 

It is easy to show that $M(x) = o(x)$ is always implied by
\begin{equation} \label{eqlandausharprelation}
 m(x) =\int_{1^{-}}^{x}\frac{\mathrm{d}M(u)}{u}=
 o(1),
\end{equation}
but the converse implication is not true in general
\cite{diamond-zhangbook}.  On the other hand, it has recently been
shown \cite[Thm. 2]{d-vPNTequiv2016} that the assertions become
equivalent under the additional hypothesis (\ref{strongL1cond}).  So, Theorem
\ref{thCheby-moebius} and the quoted result yield at once:

\begin{corollary} \label{corCheby-moebius} Under (\ref{chebyeq1}) and
  (\ref{strongL1cond}), relation (\ref{eqlandausharprelation})
  holds as well. 
\end{corollary}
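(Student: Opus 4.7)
The plan is to use analytic (Mellin) methods to derive $M(x) = o(x)$ from the hypotheses. Writing $\zeta(s) - \frac{as}{s-1} = s \int_1^\infty (N(u) - au) u^{-s-1}\, \mathrm{d}u$, condition (c) translates into the bound $\bigl|\zeta(s) - \frac{as}{s-1}\bigr| \ll |s|(\sigma-1)^{-\beta}$ for $1 < \sigma < 2$, so $\zeta$ has a simple pole of residue $a$ at $s=1$ with controlled growth on nearby vertical lines. The Chebyshev bound (a) further yields $|\zeta'(s)/\zeta(s)| \ll (\sigma-1)^{-1}$ for $\sigma > 1$, and in particular $\log |\zeta(\sigma)| = -\log(\sigma-1) + O(1)$ as $\sigma \to 1^+$.

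The main analytic step is to combine these estimates to exclude zeros of $\zeta$ on the critical line $\Re s = 1$, in a quantitative form strong enough for a Tauberian inversion. The Hadamard--Mertens inequality $\zeta(\sigma)^3 |\zeta(\sigma+it)|^4 |\zeta(\sigma+2it)| \ge 1$ is the template; however, since (c) does not furnish continuous boundary values for $\zeta$ on $\Re s = 1$ --- only the blow-up $\ll (\sigma-1)^{-\beta}$ --- the argument has to be executed in an averaged, $L^2$ form. The assumption $\beta < 1/2$ is essential here: it makes $(\sigma-1)^{-2\beta}$ locally integrable at $\sigma = 1$, which is what permits $L^2$-averaging on vertical strips to substitute for continuous boundary values.

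With this quantitative nonvanishing in hand, an $L^2$-type Wiener--Ikehara Tauberian theorem applied to $-\zeta'(s)/\zeta(s) - 1/(s-1)$ yields the PNT $\psi(x) \sim x$. Since (b) implies $N(x) \ll x$, Theorem~\ref{th:M} then gives $M(x) = o(x)$, completing the proof. The principal obstacle is the nonvanishing/Tauberian step: the failure of continuous boundary values of $\zeta$ on $\Re s = 1$ rules out the standard machinery, and the bound $\beta < 1/2$ is the essential lever that lets an $L^2$-averaged substitute succeed; any attempt to push $\beta$ toward $1/2$ or beyond would break the argument, which matches the sharpness suggested by the examples of Section~\ref{Sect examples}.
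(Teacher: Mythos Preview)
Your proposal has a fundamental gap: you conclude with $M(x)=o(x)$, but the corollary asks for the stronger relation \eqref{eqlandausharprelation}, namely $m(x)=\int_{1^{-}}^{x}u^{-1}\,\mathrm{d}M(u)=o(1)$. The paper itself emphasizes that $m(x)=o(1)$ always implies $M(x)=o(x)$ but \emph{not conversely} in general. So ``Theorem~\ref{th:M} then gives $M(x)=o(x)$, completing the proof'' does not complete the proof. The paper's actual argument is a two-liner: (\ref{strongL1cond}) implies conditions (b) and (c) of Theorem~\ref{thCheby-moebius}, so that theorem gives $M(x)=o(x)$; then one invokes the external result \cite[Thm.~2]{d-vPNTequiv2016} that under (\ref{strongL1cond}) the relations $M(x)=o(x)$ and $m(x)=o(1)$ are \emph{equivalent}. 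That last equivalence is precisely the ingredient your proposal is missing.

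There is a second, separate issue with the route you take to $M(x)=o(x)$. You propose to first derive the PNT $\psi(x)\sim x$ via an ``$L^{2}$-type Wiener--Ikehara'' argument and then apply Theorem~\ref{th:M}. This is not what the paper does: the proof of Theorem~\ref{thCheby-moebius} works directly with the Perron integral for $(1/\zeta)'$ and the auxiliary function of Lemma~\ref{lem1C-M}, never passing through the PNT. Your sketch of the PNT step is not substantiated---in particular, the ``quantitative nonvanishing'' you describe is only the qualitative bound of Lemma~\ref{lem3CC-M}, and it is far from clear that the hypotheses of the corollary suffice for a full PNT (indeed, if they did, the paper's elaborate proof of Theorem~\ref{thCheby-moebius} would be redundant). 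So even for the weaker conclusion $M(x)=o(x)$, your argument as written has a gap at the Tauberian step.
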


We can also strengthen another result of W.-B. Zhang
\cite[Thm. 2.3]{zhang1987}.  

\begin{corollary} \label{cor2Cheby-moebius} The condition
(\ref{strongL1cond})
and 
\begin{equation}\label{avcondpointwiseeq}
\int_{1}^{x}\frac{(N(t)-at)\log t}{t}\:\mathrm{d}t\ll x
\end{equation}
imply (\ref{eqlandausharprelation}).
\end{corollary}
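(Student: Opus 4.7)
The plan is to deduce Corollary~\ref{cor2Cheby-moebius} from Corollary~\ref{corCheby-moebius}. Since \eqref{strongL1cond} is one of the hypotheses, it is enough to verify the Chebyshev upper bound \eqref{chebyeq1} from \eqref{strongL1cond} and \eqref{avcondpointwiseeq}; once that is done Corollary~\ref{corCheby-moebius} yields \eqref{eqlandausharprelation} immediately.

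As a preliminary, observe that \eqref{strongL1cond}, combined with the monotonicity of $N$, forces $N(x) = ax + o(x)$: a sustained fluctuation $|N(t)-at| \ge \varepsilon t$ near some large point would (by monotonicity) propagate across a dyadic block $[x,(1+\delta)x]$ and contribute a fixed positive amount to $\int_1^{\infty}|N(t)-at|\,t^{-2}\,\mathrm{d}t$, eventually forcing divergence.

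To produce \eqref{chebyeq1} I would pass to the Mellin side. Write $R(t) = N(t)-at$ and $F(s) = \int_1^{\infty} R(t) t^{-s-1}\,\mathrm{d}t$, so that $\zeta(s) = a/(s-1) + a + sF(s)$. Hypothesis \eqref{strongL1cond} makes $F$ continuous and bounded on $\{\mathrm{Re}\,s \ge 1\}$; hypothesis \eqref{avcondpointwiseeq}, through the identity
\[
\int_1^{\infty}\!\Big(\!\int_1^x \tfrac{R(t)\log t}{t}\,\mathrm{d}t\Big)\,x^{-s-1}\,\mathrm{d}x \;=\; -\frac{F'(s)}{s},
\]
yields $|F'(\sigma)| \ll (\sigma-1)^{-1}$ as $\sigma\to 1^+$. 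Combining this with $\zeta'(s) = -a/(s-1)^2 + F(s) + sF'(s)$, a short calculation gives
\[
-\frac{\zeta'(\sigma)}{\zeta(\sigma)} \;=\; \frac{1}{\sigma-1} + O(1), \qquad \sigma\to 1^+,
\]
equivalently $\int_1^{\infty}\psi(t)t^{-\sigma-1}\,\mathrm{d}t = (\sigma-1)^{-1} + O(1)$.

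The main obstacle is converting this real-axis Mellin estimate into the pointwise bound $\psi(x) \ll x$, since a naive dyadic Tauberian dissection delivers only the weaker $\psi(x) \ll x\log x$. To sharpen to Chebyshev I would invoke the Beurling analogue of Selberg's identity $L^2\,\mathrm{d}N = \mathrm{d}N * L\,\mathrm{d}\psi + \mathrm{d}N * \mathrm{d}\psi * \mathrm{d}\psi$: under \eqref{avcondpointwiseeq}, integration by parts gives $\int_{1-}^x \log^2 u\,\mathrm{d}N(u) = ax\log^2 x + o(x\log^2 x)$, while both convolution terms on the right are non-negative measures, so a Tauberian positivity argument (akin to Erd\H{o}s--Selberg) should extract the bound $\psi(x) \ll x$. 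Once \eqref{chebyeq1} is secured, Corollary~\ref{corCheby-moebius} closes the proof.
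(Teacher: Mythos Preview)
Your overall plan coincides with the paper's: reduce everything to the Chebyshev upper bound \eqref{chebyeq1} and then invoke Corollary~\ref{corCheby-moebius}. The paper, however, does not attempt to derive \eqref{chebyeq1} from scratch; it simply cites the known result \cite[Thm.~11.1]{diamond-zhangbook} that \eqref{strongL1cond} together with \eqref{avcondpointwiseeq} already yield Chebyshev bounds, and the proof is finished in one line.

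Your Mellin-side computation is correct and does give $-\zeta'(\sigma)/\zeta(\sigma)=(\sigma-1)^{-1}+O(1)$, but---as you yourself flag---this real-axis estimate only produces $\psi(x)\ll x\log x$ by a direct Tauberian argument. The passage from there to $\psi(x)\ll x$ is exactly where your proposal has a genuine gap. The Selberg identity $L^{2}\,\mathrm{d}N=\mathrm{d}N\ast L\,\mathrm{d}\psi+\mathrm{d}N\ast\mathrm{d}\psi\ast\mathrm{d}\psi$ is correctly stated, and positivity of the right-hand side together with $N(x)\sim ax$ indeed gives $\int_{1^{-}}^{x}\mathrm{d}N\ast L\,\mathrm{d}\psi\le ax\log^{2}x\,(1+o(1))$; but if you try to unwind this, using say $N(u)\ge (a/2)u$ for large $u$, you recover only $\int_{1}^{y}t^{-1}\log t\,\mathrm{d}\psi(t)\ll\log^{2}y$, which again yields no better than $\psi(y)\ll y\log y$. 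Turning the Selberg formula into a genuine Chebyshev bound in the Beurling setting under precisely these hypotheses is itself a substantial theorem---essentially the content of the reference the paper cites---and the phrase ``a Tauberian positivity argument (akin to Erd\H{o}s--Selberg) should extract the bound'' is a hope, not a proof. Either cite the known result, as the paper does, or supply a complete argument; as written, the only non-trivial step is missing.
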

\begin{proof} Chebyshev bounds are known to hold under the hypotheses
  (\ref{strongL1cond}) and (\ref{avcondpointwiseeq})
  \cite[Thm. 11.1]{diamond-zhangbook}. The rest follows from Corollary
  \ref{corCheby-moebius}. 
\end{proof}

We shall prove Theorem \ref{thCheby-moebius} using several lemmas.
Our method is inspired by W.-B. Zhang's proof of a H{a}l\'{a}sz-type
theorem for Beurling primes \cite{zhang1987}.  Our first step is to
replace $M(x)=o(x)$ by an equivalent asymptotic relation.

\begin{lemma}\label{lem1C-M} Suppose that the integer counting
  function of a g-number system has a positive density, i.e.,
  $N(x)\sim a x$ for some $a>0$. Then, $M(x)=o(x)$ if and only if
\begin{equation}
\label{MoebiusAv} f(x)=\int_{1}^{x}\left(\int_{1}^{u}\log t\:
  \mathrm{d}M(t) \right)\frac{\mathrm{du}}{u}=o(x\log x). 
\end{equation}
\end{lemma}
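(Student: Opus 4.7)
The plan is to route the equivalence through an algebraic reduction of $f(x)$ followed by a Tauberian extraction. Exchanging the order of integration gives
\[
 f(x) = \int_1^x \log t \,\log(x/t)\,\mathrm{d}M(t),
\]
and one further integration by parts (expanding $\log t\,\log(x/t) = \log x\,\log t - \log^2 t$ and integrating each Stieltjes integral against $\mathrm{d}M$) yields the cleaner identity
\[
 f(x) = A(x)\log x - 2\int_1^x \frac{A(t)}{t}\,\mathrm{d}t, \qquad A(x) := \int_1^x \frac{M(t)}{t}\,\mathrm{d}t.
\]
Since $|\mathrm{d}M| \le \mathrm{d}N$, the hypothesis $N(x)\sim ax$ gives $|M(t)|/t = O(1)$, hence $A(x) = O(x)$ and therefore $\int_1^x A(t)/t\,\mathrm{d}t = O(x)$. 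Consequently $f(x) = A(x)\log x + O(x)$, and the original equivalence is reduced to
\[
 f(x) = o(x\log x) \iff A(x) = o(x).
\]

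The ``only if'' direction is then immediate: if $M(x) = o(x)$, then $M(t)/t = o(1)$, so $A(x) = \int_1^x o(1)\,\mathrm{d}t = o(x)$. For the ``if'' direction I must recover $M(x)/x = o(1)$ from $A(x) = o(x)$. The preparatory observation is that $h(x) := M(x)/x$ is \emph{slowly oscillating}: for every $\varepsilon > 0$ there are $\eta > 0$ and $T$ such that $|h(x) - h(y)| < \varepsilon$ whenever $T \le y \le x \le (1+\eta)y$. This follows from the total-variation bound $|M(x) - M(y)| \le N(x) - N(y)$ combined with $N(x) \sim ax$, which controls both contributions in $h(x) - h(y) = (M(x)-M(y))/x + M(y)(1/x - 1/y)$ by a quantity $O(\eta) + o(1)$.

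With slow oscillation in hand, the Tauberian step is standard: if $\limsup_{x\to\infty} h(x) = L > 0$, slow oscillation forces $h(x) \ge L/2$ on an interval $[x_n, (1+\eta)x_n]$ for a suitable sequence $x_n \to \infty$, whence
\[
 A((1+\eta)x_n) - A(x_n) = \int_{x_n}^{(1+\eta)x_n} h(t)\,\mathrm{d}t \ge \tfrac{L\eta}{2}\,x_n,
\]
contradicting $A(x) = o(x)$; the symmetric argument handles $\liminf h$. I expect the Tauberian step to be the main obstacle: the algebraic rearrangement of $f(x)$ is routine Fubini plus integration by parts, whereas establishing slow oscillation (which genuinely uses $N(x)\sim ax$, not merely $N(x) = O(x)$) and turning a vanishing average into pointwise vanishing of $M(x)/x$ is the substantive content of the proof.
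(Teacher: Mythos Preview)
Your proof is correct and proceeds by a genuinely different route from the paper's. The paper does not reduce to $A(x)=\int_1^x M(t)/t\,\mathrm{d}t$; instead it introduces the auxiliary function $g(x)=\int_{1^-}^x \log t\,(\mathrm{d}N(t)+\mathrm{d}M(t))$ and exploits the structural fact that $\mathrm{d}N+\mathrm{d}M = 2\sum_{n\ge 0}\mathrm{d}\Pi^{*2n}/(2n)!$ is a non-negative measure, so that $g$ is non-decreasing. From $f(x)=o(x\log x)$ and $N(x)\sim ax$ they obtain $\int_1^x g(u)/u\,\mathrm{d}u\sim ax\log x$, and then a monotone (Karamata-type) differencing argument yields $g(x)\sim ax\log x$; subtracting off $\int \log u\,\mathrm{d}N(u)\sim ax\log x$ and integrating by parts gives $M(x)=o(x)$. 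Your approach trades that ``$\cosh$'' positivity trick for a direct slow-oscillation argument on $h(x)=M(x)/x$, which is more self-contained and makes transparent exactly where the full asymptotic $N(x)\sim ax$ (rather than $N(x)=O(x)$) is needed: you use it to control $N(x)-N(y)$ on short multiplicative intervals. The paper's argument is slicker once one knows the $\mathrm{d}N+\mathrm{d}M\ge 0$ identity, while yours is a clean instance of the ``slowly oscillating $+$ Ces\`aro convergence $\Rightarrow$ convergence'' Tauberian principle and would be accessible to a reader unfamiliar with the convolution-exponential formalism.
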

\begin{proof}The direct implication is trivial. For the converse, we
  set 
\[
g(x)=\int_{1^{-}}^{x} \log t\, (\mathrm{d}N(t)+\mathrm{d}M(t)). 
\]
Notice that 
\[
\mathrm{d}N+\mathrm{d}M=2\sum_{n=0}^{\infty}\,
\mathrm{d}\Pi^{\ast 2n}/(2n)!
\]
is a non-negative measure, so that $g$ is non-decreasing. The
hypotheses (\ref{MoebiusAv}) and $N(x)\sim ax$ give
$$
\int_{1}^{x}\frac{g(u)}{u}\,\mathrm{d}u\sim  a x\log x.
$$
Hence,
$$
\int_{1}^{x}g(u)\,\mathrm{d}u= x \int_{1}^{x}\frac{g(u)}{u} \,\mathrm{d}u
- \int_{1}^{x}\!\left(\int_{1}^{t}\frac{g(u)}{u} \,\mathrm{d}u\right)
\mathrm{d}t \sim \frac{a}{2} x^{2}\log x.  
$$
Since $g$ is a non-decreasing function, we see by a simple
differencing argument (see e.g. \cite[p. 34]{korevaarbook}) that
$g(x)\sim a x\log x$. But also we have $\int_{1^{-}}^{x}\log u\,
\mathrm{d}N(u)\sim a x\log x $; consequently, $\int_{1^{-}}^{x}\log
u\, \mathrm{d}M(u) = o( x\log x)$.  Integration by parts now yields
$$
\log x\, M(x)= \int_{1}^{x}\log u\, \mathrm{d}M(u) 
+\int_{1}^{x} \frac{M(u)}{u}\, \mathrm{d}u = o(x\log x)+O(x),
$$
and the result then follows by dividing by $\log x$.
\end{proof}

The next lemma provides a crucial analytic estimate.
\begin{lemma}\label{lem3CC-M} Suppose that $N(x)\sim a x$, with $a>0$. Then, 
\begin{equation}
\label{invzeta}
\frac{1}{\zeta(\sigma+it)}=o\left(\frac{1}{\sigma-1}\right), \quad
\sigma \to 1^{+},
\end{equation}
uniformly for $t$ on compact intervals.
\end{lemma}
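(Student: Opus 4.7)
The plan is to combine an Abelian-type estimate coming from $N(x)\sim ax$ with the classical $3+4\cos\theta+\cos 2\theta\geq 0$ inequality applied to the Euler product of $\zeta$, a strategy analogous to the standard non-vanishing argument for classical zeta on the line $\operatorname{Re}(s)=1$.

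First, write $N(u)=au+E(u)$ with $E(u)=o(u)$. Using the standard formula $\zeta(s)=s\int_1^\infty N(u)u^{-s-1}\,\mathrm{d}u$, this gives the decomposition
\[
\zeta(s)=\frac{as}{s-1}+sJ(s),\qquad J(s):=\int_1^\infty E(u)u^{-s-1}\,\mathrm{d}u.
\]
Since $|E(u)|/u\to 0$ as $u\to\infty$, Abelian's theorem for Laplace--Stieltjes transforms gives
\[
(\sigma-1)\int_1^\infty |E(u)|\,u^{-\sigma-1}\,\mathrm{d}u\longrightarrow 0,\qquad \sigma\to 1^+.
\]
The right-hand side is independent of $t$, so $(\sigma-1)J(\sigma+it)=o(1)$ uniformly in $t\in\mathbb{R}$, yielding
\[
\zeta(\sigma+it)=\frac{a(\sigma+it)}{(\sigma-1)+it}+o\!\left(\frac{1}{\sigma-1}\right)
\]
uniformly for $t$ on compact intervals.

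Second, from the Euler product $\log\zeta(s)=\sum_{p,k}p^{-ks}/k$ (valid for $\operatorname{Re}(s)>1$), the identity $3+4\cos\theta+\cos 2\theta\geq 0$ implies the Mertens-type inequality
\[
|\zeta(\sigma)|^{3}|\zeta(\sigma+it)|^{4}|\zeta(\sigma+2it)|\geq 1,\qquad \sigma>1,\; t\in\mathbb{R}.
\]

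Finally, I would combine these in two regimes. For $|t|\geq \delta>0$ the principal part of $\zeta$ at $\sigma+2it$ is bounded by $O_\delta(1)$ (since $|(\sigma-1)+2it|\geq 2\delta$), so the expansion yields $|\zeta(\sigma+2it)|=o(1/(\sigma-1))$; inserting this and $|\zeta(\sigma)|\sim a/(\sigma-1)$ into the Mertens inequality gives $|\zeta(\sigma+it)|\geq (\sigma-1)/\varepsilon(\sigma)^{1/4}$ with some $\varepsilon(\sigma)\to 0$, whence $(\sigma-1)/|\zeta(\sigma+it)|=o(1)$ uniformly on $|t|\in[\delta,T]$. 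For $|t|\leq \delta$, the pole of $\zeta$ at $s=1$ dominates: the leading-order inversion
\[
\frac{1}{\zeta(s)}=\frac{s-1}{a+(s-1)R(s)}\sim \frac{s-1}{a},
\]
where $R(s)$ denotes the Abelian remainder, gives $|1/\zeta(\sigma+it)|\leq 2|s-1|/a\leq 2(\sigma-1+\delta)/a$, and so $(\sigma-1)/|\zeta(\sigma+it)|\to 0$ uniformly for $|t|\leq\delta$.

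The main obstacle is the patching of the two regimes: the leading-order inversion for $|t|\leq\delta$ requires $|(s-1)R(s)|\leq a/2$, which in turn demands that $\delta$ be chosen small enough relative to the Abelian rate $\eta(\sigma)$ from Step 1. Making this choice uniform in the compact interval $[-T,T]$ is the delicate part and requires a careful matching of $\delta$ at the boundary $|t|=\delta$ between the pole-dominated and Mertens-dominated regimes.
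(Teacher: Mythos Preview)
Your ingredients---the Abelian estimate $\zeta(s)=a/(s-1)+o(|s|/(\sigma-1))$ from $N(x)\sim ax$ and the $3$--$4$--$1$ inequality---are exactly those used in the paper, and your analysis on $|t|\in[\delta,T]$ is correct. The unresolved issue you flag (matching the pole-dominated and Mertens-dominated regimes) is a genuine gap in the write-up as it stands: with a \emph{fixed} $\delta>0$ the condition $|(s-1)J(s)|\le a/2$ needed for the inversion near $s=1$ fails, since $|(s-1)J(s)|\le |s-1|\,\eta(\sigma)/(\sigma-1)$ can blow up when $|t|\asymp\delta$ and $\sigma\to1^{+}$. Your approach \emph{can} be salvaged by letting $\delta=\delta(\sigma)$ (for instance $\delta=(\sigma-1)/\sqrt{\eta(\sigma)}$, which simultaneously forces $(\sigma-1)/\delta\to0$ for the Mertens regime and $\delta\,\eta(\sigma)/(\sigma-1)\to0$ for the pole regime), but this still needs to be written out.

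The paper sidesteps the patching problem entirely. It first establishes the bound \emph{pointwise} in $t$ (using the same two ingredients you use), and then observes that
\[
\frac{1}{|\zeta(\sigma)\zeta(\sigma+it)|}=\exp\!\Big(-\int_{1}^{\infty}x^{-\sigma}\bigl(1+\cos(t\log x)\bigr)\,\mathrm{d}\Pi(x)\Big)
\]
is, for each $t$, monotone in $\sigma$ and continuous in $t$. Since a net of continuous monotone functions converging pointwise to $0$ converges uniformly on compacta by Dini's theorem, uniformity in $t$ comes for free. This is cleaner than your regime-splitting: no $\delta$ ever enters, and no matching is required.
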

\begin{proof} We first show that (\ref{invzeta}) holds pointwise,
i.e., for each fixed $t\in \mathbb{R}$, without the uniformity
requirement. If $t=0$ this is clear because $\zeta(\sigma)\sim
a/(\sigma-1)$ and thus $1/\zeta(\sigma)=o(1)$. Note that $N(x)\sim a x$ 
implies  
$$
\zeta(s)=\frac{a}{s-1}+o\left(\frac{|s|}{\sigma-1}\right)
$$
uniformly. If $t\neq 0$, we obtain, $\zeta(\sigma+2it)=o_{t}(1/(\sigma-1))$. 
Applying the 3-4-1 inequality, we conclude that 
$$
1\leq |\zeta(\sigma)|^{3}|\zeta(\sigma+it)|^{4} |\zeta(\sigma+2it)|
=|\zeta(\sigma+it)|^{4} \,o_{t}\!\left(\frac{1}{(\sigma-1)^{4}}\right), 
$$
which shows our claim. Equivalently, we have 
$$
\exp\left(-\int_{1}^{\infty}x^{-\sigma}(1+\cos(t\log x))
\,\mathrm{d}\Pi(x)\right)
=\frac{1}{|\zeta(\sigma)\zeta(\sigma+it)|}=o_{t}(1).
$$
The left-hand side of the last formula is a net of continuous
monotone functions in the variable $\sigma$ that tend pointwise to 0 as $\sigma\to 1^{+}$;
Dini's theorem then asserts that it must also converge to 0 uniformly
for $t$ on compact sets, as required. 
\end{proof}

The next lemma is very simple but useful.

\begin{lemma}\label{lem3C-M}Let $F$ be a right continuous function of
  local bounded variation with support in $[1,\infty)$ satisfying the
  bound $F(x)=O(x)$. Set $\widehat{F}(s)=\int_{1^{-}}^{\infty}x^{-s}
  \mathrm{d}F(x)$, $\Re s>1$. Then,  
$$
\int_{\Re s=\sigma} \left| \frac{\widehat{F}(s)}{s}\right|^{2}
|\mathrm{d}s|\ll \frac{1}{\sigma-1} \,.
$$
\end{lemma}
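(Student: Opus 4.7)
The plan is to recognize the stated inequality as a standard Plancherel estimate, after an integration by parts and the logarithmic change of variables $x=e^{u}$.

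First I would rewrite $\widehat{F}(s)/s$ as an ordinary Fourier-type integral. Integration by parts in the Mellin representation gives
\[
\frac{\widehat{F}(s)}{s}=\int_{1}^{\infty} F(x)\, x^{-s-1}\,\mathrm{d}x,\qquad \Re s>1,
\]
where the boundary contribution at $x=1$ vanishes because $F(1^{-})=0$ and the one at $\infty$ vanishes because $F(x)=O(x)$ while $\Re s>1$. Setting $s=\sigma+it$ and $x=e^{u}$, this becomes
\[
\frac{\widehat{F}(\sigma+it)}{\sigma+it}=\int_{0}^{\infty} F(e^{u})\, e^{-\sigma u}\, e^{-itu}\,\mathrm{d}u,
\]
which is (up to the usual normalization) the Fourier transform, evaluated at $t$, of the function $g_{\sigma}(u):=F(e^{u})\,e^{-\sigma u}\,\mathbf{1}_{[0,\infty)}(u)$.

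Next I would apply Plancherel's theorem. Because $F(x)=O(x)$, we have $|g_{\sigma}(u)|\ll e^{(1-\sigma)u}$ on $[0,\infty)$, so $g_{\sigma}\in L^{2}(\mathbb{R})$ for every $\sigma>1$, and
\[
\int_{-\infty}^{\infty}\!\left|\frac{\widehat{F}(\sigma+it)}{\sigma+it}\right|^{2}\mathrm{d}t
= 2\pi\int_{0}^{\infty}|F(e^{u})|^{2}\,e^{-2\sigma u}\,\mathrm{d}u
\ll \int_{0}^{\infty} e^{-2(\sigma-1)u}\,\mathrm{d}u = \frac{1}{2(\sigma-1)}.
\]
Since $|\mathrm{d}s|=|\mathrm{d}t|$ on the vertical line $\Re s=\sigma$, this is precisely the bound claimed.

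The only subtlety is the integration-by-parts identity for $\widehat{F}(s)/s$, which must be justified for a right-continuous function of locally bounded variation. This is standard once one writes $\widehat{F}(s)=\int_{1^{-}}^{\infty}x^{-s}\,\mathrm{d}F(x)$ and integrates by parts against the antiderivative $-x^{-s}/s$ of $-s\,x^{-s-1}$; the $O(x)$ growth of $F$ and the condition $\Re s>1$ kill the boundary terms. Once this is in place, everything reduces to Plancherel applied to an $L^{2}$ function whose norm is controlled by the $O(x)$ hypothesis, so no real obstacle remains.
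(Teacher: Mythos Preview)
Your proof is correct and follows exactly the paper's approach: integrate by parts to write $\widehat{F}(s)/s=\int_{1}^{\infty}F(x)x^{-s-1}\,\mathrm{d}x$, substitute $x=e^{u}$, and apply Plancherel to $u\mapsto F(e^{u})e^{-\sigma u}$ to obtain the bound $2\pi\int_{0}^{\infty}|F(e^{u})|^{2}e^{-2\sigma u}\,\mathrm{d}u\ll (\sigma-1)^{-1}$. The paper compresses this into a single displayed line, but the content is identical.
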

\begin{proof}
Indeed, by the Plancherel theorem,
$$
\int_{\Re s=\sigma}  \left| \frac{\widehat{F}(s)}{s}\right|^{2}
|\mathrm{d}s|=2\pi\! \int_{0}^{\infty}\!\! e^{-2\sigma x}
|F(e^{x})|^{2}\,\mathrm{d}x\ll \int_{0}^{\infty}\!\! e^{-2(\sigma-1) x}
\,\mathrm{d}x= \frac{1}{2(\sigma-1)}\,. \qedhere
$$
\end{proof}

If a g-number system satisfies a Chebyshev upper bound, then Lemma
\ref{lem3C-M} implies  
\begin{equation}
\label{L2Chevy}
\int_{\Re s=\sigma} \left|\frac{\zeta'(s)}{s\zeta(s)}\right|^{2} 
|\mathrm{d}s|\ll \frac{1}{\sigma-1}.
\end{equation}
Similarly, $N(x)=O(x)$ yields
\begin{equation}
\label{L2zeta}
\int_{\Re s=\sigma} \left|\frac{\zeta(s)}{s}\right|^{2} |\mathrm{d}s|
\ll \frac{1}{\sigma-1}.
\end{equation}

Also, we shall need the following version of the Wiener-Wintner theorem
\cite{bateman-diamond2000,montgomeryBook1970}.  
\begin{lemma}\label{lem4C-M} Let $F_{1}$ and $F_{2}$ be right
continuous functions of local bounded variation with support in
$[1,\infty)$. Suppose that their Mellin-Stieltjes transforms
$\widehat{F}_{j}(s)=\int_{1^{-}}^{\infty}x^{-s} \mathrm{d}F_{j}(x)$
are convergent on $\Re s>\alpha$, that $F_{1}$ is non-decreasing, and
$|\mathrm{d}F_{2}|\leq \mathrm{d}F_{1}$. Then, for all
$b\in\mathbb{R}$, $c>0$, and $\sigma>\alpha$,
$$
\int_{b}^{b+c}|\widehat{F}_{2}(\sigma+it)|^{2}\,\mathrm{d}t
\leq 2 \int_{-c}^{c}|\widehat{F}_{1}(\sigma+it)|^{2}\,\mathrm{d}t.
$$
\end{lemma}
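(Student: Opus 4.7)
The plan is to reduce the statement to a Fourier inequality on $L^{2}$. Via the substitution $x = e^{u}$, set $G_{j}(u) = F_{j}(e^{u})$ and let $\mathrm{d}\mu_{j}(u) = e^{-\sigma u}\,\mathrm{d}G_{j}(u)$ be a Borel measure on $[0,\infty)$. Then
\[
\widehat{F}_{j}(\sigma+it) \;=\; \int_{0}^{\infty} e^{-itu}\,\mathrm{d}\mu_{j}(u) \;=:\; \hat\mu_{j}(t),
\]
the Fourier--Stieltjes transform of $\mu_{j}$. Because $\sigma > \alpha$, $\mu_{1}$ is a finite positive measure (with total mass $\widehat{F}_{1}(\sigma) < \infty$), and $|\mathrm{d}F_{2}| \le \mathrm{d}F_{1}$ carries over to $|\mu_{2}| \le \mu_{1}$. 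The target inequality becomes $\int_{b}^{b+c}|\hat\mu_{2}(t)|^{2}\,\mathrm{d}t \le 2\int_{-c}^{c}|\hat\mu_{1}(t)|^{2}\,\mathrm{d}t$.

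The main device I would use is a non-negative majorant of $1_{[b,b+c]}$ whose Fourier transform is itself pointwise non-negative. The Fejér triangle $\phi_{c}(s) = (1 - |s|/c)_{+}$, supported in $[-c,c]$, has
\[
\hat\phi_{c}(y) \;=\; \frac{4\sin^{2}(cy/2)}{c\,y^{2}} \;\ge\; 0,
\]
and a one-line check shows $1_{[b,b+c]}(t) \le 2\phi_{c}(t - b - c/2)$, since the shifted triangle is at least $1/2$ throughout $[b,b+c]$.

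With this in hand, I would expand $|\hat\mu_{2}(t)|^{2} = \iint e^{-it(u-v)}\,\mathrm{d}\mu_{2}(u)\,\mathrm{d}\overline{\mu_{2}}(v)$ and apply Fubini (justified by finiteness of $\mu_{1}$). The $t$-integration of $\phi_{c}(t - b - c/2) e^{-it(u-v)}$ produces $e^{-i(b+c/2)(u-v)}\hat\phi_{c}(u-v)$; passing to moduli and using $\hat\phi_{c} \ge 0$ together with $|\mu_{2}| \le \mu_{1}$, the double integral is bounded by $\iint \hat\phi_{c}(u-v)\,\mathrm{d}\mu_{1}(u)\,\mathrm{d}\mu_{1}(v)$, which Parseval run in reverse identifies as $\int \phi_{c}(t)|\hat\mu_{1}(t)|^{2}\,\mathrm{d}t$. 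Since $0 \le \phi_{c} \le 1_{[-c,c]}$, combining with the prefactor $2$ from the majorant yields the assertion. The only delicate point is the choice of kernel: one must simultaneously achieve (a) a small constant majorizing $1_{[b,b+c]}$ and (b) non-negativity of the Fourier transform, so that the passage from $\mu_{2}$ to $|\mu_{2}|$ loses nothing; the Fejér triangle $2\phi_{c}$ is the canonical choice and is exactly what dictates the constant $2$ in the conclusion.
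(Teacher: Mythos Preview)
Your argument is correct and is precisely the standard Fej\'er-kernel proof of the Wiener--Wintner inequality. The paper itself does not supply a proof of this lemma; it merely quotes the result with references to Bateman--Diamond (2000) and Montgomery's \emph{Topics in Multiplicative Number Theory}, where the same triangle-kernel approach is used. So your proposal fills in exactly what the cited sources contain, and there is nothing to compare.
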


We are ready to present the proof of Theorem \ref{thCheby-moebius}.
\begin{proof}[Proof of Theorem \ref{thCheby-moebius}] 
In view of Lemma
  \ref{lem1C-M}, it suffices to show (\ref{MoebiusAv}).  
The Mellin-Stieltjes transform of the function $f$ is 
$$
-\frac{1}{s}\left(\frac{1}{\zeta(s)}\right)'.
$$
Given $x > 1$, it is convenient to set $\ss=1+1/\log x$.  By the
Perron inversion formula, the Cauchy-Schwarz inequality, and
(\ref{L2Chevy}), we have
\begin{align*}
\nonumber
\frac{|f(x)|}{x}&=\frac{1}{2\pi}\left|\int_{\Re s=\ss}  
\frac{x^{s-1}\zeta'(s)}{s^{2}\zeta^{2}(s)}\, \mathrm{d}s
\right|\leq \frac{e}{2\pi} \int_{\Re s=\ss}
\left|\frac{\zeta'(s)}{s^{2}\zeta^{2}(s)}\right| |\mathrm{d}s|  \\
&
\ll \log^{1/2}x \left( \int_{\Re s=\ss} \left|
\frac{1}{s\zeta(s)}\right|^{2} |\mathrm{d}s| \right)^{1/2}.
\end{align*}

Next, we take a large number $\lambda$, arbitrary but fixed. We
split the integration line $\{\Re s=\ss\}$ of the last integral into two
parts, $\{\ss+it:\: |t|\geq \lambda\} $ and $\{\ss+it:\: |t|\leq
\lambda\} $, and we denote the corresponding integrals over these sets
as $I_{1}(x)$ and $I_{2}(x)$ respectively, so that
\begin{equation}
\label{eq1proofM}
\frac{|f(x)|}{x}\ll \big((I_{1}(x))^{1/2}+(I_{2}(x))^{1/2}\big) \log^{1/2}x.
\end{equation}

To estimate $I_{1}(x)$, we apply Lemma \ref{lem4C-M} to
$|\mathrm{d}M|\leq \mathrm{d}N$ and employ  (\ref{L2zeta}), 
$$
\left(\int_{-\lambda -m-1}^{-\lambda-m} +\int_{\lambda+m}^{\lambda+m+1}
\right)\left|\frac{1}{\zeta(\ss+it)}\right|^{2} \mathrm{d}t\leq 4
\int_{-1}^{1} \left|\zeta(\ss+it)\right|^{2} \mathrm{d}t \ll \log
x. 
$$
Therefore, we have the bound
\begin{equation}
\label{eq2proofM}
I_{1}(x)\ll \log x \sum_{m=0}^{\infty} \frac{1}{1+m^{2}
+\lambda^{2}}\ll \frac{\log x}{\lambda}.
\end{equation}

To deal with $I_{2}(x)$, we need to derive further properties of the
zeta function. Using the hypothesis \eqref{L1cond}, we find that
\begin{align*}
\zeta(s)-\frac{a}{s-1}&= s \!\int_{1}^{\infty}\!\!
x^{-s}\frac{N(x)-ax}{x}\:\mathrm{d}x +a \\
&\ll |s|\!\int_{1}^{\infty}\!\! x^{-\sigma}\frac{|N(x)-ax|}{x}\,
\mathrm{d}x =O_t\!\left(\frac{1}{(\sigma-1)^{\beta}}\right). 
\end{align*}
Hence, we obtain
\begin{equation}
\label{eq3proofM}
\zeta(\ss+it)= \frac{a}{\ss-1+it}
+ O(\log^{\beta} x)
\end{equation}
for some number $\beta \in (0,\,1/2)$, uniformly for $t$ on
compact sets. We are ready to estimate $I_{2}(x)$. Set
$\eta=(1-2\beta)/(1-\beta)$ and note that $\eta \in (0,\, 1)$.  Then,
using Lemma~\ref{lem3CC-M},
$$
I_{2}(x)\leq \int_{-\lambda}^{\lambda} \left|
\frac{1}{\zeta(\ss+it)}\right|^{2}\mathrm{d}t\leq 
\left(\int_{-\lambda}^{\lambda} \left|
\frac{1}{\zeta(\ss+it)}\right|^{2-\eta}\mathrm{d}t\right) 
 o_{\lambda}(\log^{\eta}x).
$$   
On the other hand, applying  Lemma \ref{lem4C-M} to 
$\mathrm{d}F_{1}=\exp^{\ast}(-(1-\eta/2)\,\mathrm{d}\Pi)$ and
$\mathrm{d}F_{2}=\exp^{\ast}((1-\eta/2)\,\mathrm{d}\Pi)$
and using  (\ref{eq3proofM}), we find 
\begin{align*}
\int_{-\lambda}^{\lambda}\left|\frac{1}{\zeta(\ss+it)}\right|^{2-\eta}\mathrm{d}t
&
\leq 4\int_{-\lambda}^{\lambda} \left|\zeta(\ss+it)\right|^{2-\eta}\mathrm{d}t
\\
&
 \ll  \int_{-\lambda}^{\lambda} \frac{\mathrm{d}t}{ ((\ss-1)^{2}+t^{2})^{1-\eta/2}}
+\lambda\log^{(2-\eta)\beta}x\ll \lambda \log^{1-\eta}x,
\end{align*}
which implies $I_{2}(x)= o_{\lambda}(\log
x)$. Inserting this and the bound (\ref{eq2proofM}) into
(\ref{eq1proofM}), we arrive at 
$
|f(x)|/(x\log x)\ll \lambda^{-1/2} +o_{\lambda}(1).$ Taking first the limit
superior as $x\to\infty$ and then $\lambda\to\infty$, we have shown
that 
$$
\lim _{x\to\infty} \frac{f(x)}{x\log x}=0.
$$
By Lemma~\ref{lem1C-M}, $M(x) = o(x)$.
This completes the proof of Theorem \ref{thCheby-moebius}.
\end{proof}

\section{Three examples}\label{Sect examples}
The examples of this section center on the importance of
$N(x)$ being close to $ax$ in Theorem \ref{thCheby-moebius}.  In
the first example, $N(x)/x$ has excessive wobble and in the second
one, excessive growth; in both cases $M(x) = o(x)$ fails. The third
example shows that condition~\eqref{L1cond} is not sufficient to
insure the convergence of $N(x)/x$, whence the introduction of this
hypothesis. 

In preparation for treating the first two examples, we give a
necessary condition for $M(x) = o(x)$. An analytic function $G(s)$ on
the half-plane $\{s:\:\Re s>\alpha\}$ is said to have a right-hand zero of
order $\beta>0$ at $s=it_{0}+\alpha$ if $\lim_{\sigma\to\alpha^{+}}
(\sigma -\alpha)^{-\beta}G(it_{0}+\sigma)$ exists and is non-zero.
Our examples violate the following necessary condition:
\begin{lemma}
\label{lexC-M}
If $M(x)=o(x)$, then $\zeta(s)$ does not have any right-hand zero of
order $\geq 1$ on $\{\Re s=1\}$. 
\end{lemma}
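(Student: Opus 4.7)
The plan is to argue by contrapositive: assuming $\zeta$ has a right-hand zero of order $\beta \geq 1$ at some $1 + it_{0}$, I will derive a contradiction with $M(x) = o(x)$.

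First I would establish the analytic identity
\[
\frac{1}{\zeta(s)} = s\int_{1}^{\infty} \frac{M(x)}{x^{s+1}} \, \mathrm{d}x \qquad (\Re s > 1).
\]
The defining convolution relation $\mathrm{d}N \ast \mathrm{d}M = \delta_{1}$ gives the Mellin product identity $\widehat{M}(s)\zeta(s) = 1$ in a common half-plane of absolute convergence. Under $M(x) = o(x)$, integration by parts (with vanishing boundary contributions, since $M(1^{-}) = 0$ and $x^{-s}M(x) \to 0$ for $\Re s > 1$) shows that $\widehat{M}(s)$ equals the displayed integral and is analytic on $\Re s > 1$. The hypothesized right-hand zero forces $\zeta$ to be meromorphic on a right neighborhood of $1 + it_{0}$ and non-vanishing on the segment $\{\sigma + it_{0}:\sigma > 1\text{ near }1\}$, so the identity $1/\zeta = \widehat{M}$ extends by analytic continuation to that segment.

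Next I would exploit the $o(x)$ bound. Fix $\epsilon > 0$ and choose $X_{0}$ with $|M(x)| < \epsilon x$ for $x \geq X_{0}$. Splitting the integral at $X_{0}$:
\[
\left|\frac{1}{\zeta(\sigma + it_{0})}\right| \leq |\sigma + it_{0}|\left(\int_{1}^{X_{0}} \frac{|M(x)|}{x^{\sigma+1}}\, \mathrm{d}x + \epsilon \int_{X_{0}}^{\infty} \frac{\mathrm{d}x}{x^{\sigma}}\right) \leq |\sigma + it_{0}|\left(C_{X_{0}} + \frac{\epsilon\, X_{0}^{1-\sigma}}{\sigma - 1}\right).
\]
Multiplying by $\sigma - 1$, letting $\sigma \to 1^{+}$, and sending $\epsilon \to 0^{+}$ yields
\[
\lim_{\sigma \to 1^{+}} (\sigma - 1)\cdot \frac{1}{\zeta(\sigma + it_{0})} = 0.
\]

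But the assumed right-hand zero of order $\beta \geq 1$ gives $\zeta(\sigma + it_{0}) \sim c(\sigma - 1)^{\beta}$ with $c \neq 0$, so $(\sigma - 1)/\zeta(\sigma + it_{0}) \sim c^{-1}(\sigma - 1)^{1 - \beta}$, which tends to $c^{-1} \neq 0$ if $\beta = 1$ and to $\infty$ if $\beta > 1$. This contradicts the previous display and completes the proof. The main delicate point, to my mind, is the analytic-continuation step: one must make sure the Mellin identity $\widehat{M}(s) = 1/\zeta(s)$ is valid in a right neighborhood of $1 + it_{0}$ rather than only in some far-right half-plane, since the abscissa of convergence of $\zeta$ could in principle exceed $1$. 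Once that bookkeeping is done, the remainder is a routine $\epsilon$-argument.
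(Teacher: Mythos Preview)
Your argument is correct and follows essentially the same route as the paper: both establish the integral representation $1/\zeta(s) = s\int_{1}^{\infty} M(x)\,x^{-s-1}\,\mathrm{d}x$ on $\Re s>1$, deduce from $M(x)=o(x)$ that $1/\zeta(\sigma+it_{0})=o\big((\sigma-1)^{-1}\big)$ as $\sigma\to 1^{+}$, and observe this is incompatible with a right-hand zero of order $\geq 1$. The paper simply states the $o$-bound in one line; your explicit $\epsilon$-splitting is the standard unpacking of that assertion.

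Two small remarks on your write-up. First, the worry about analytic continuation is unnecessary: by the paper's definition of a right-hand zero on $\{\Re s=1\}$, $\zeta$ is assumed analytic on $\Re s>1$, so the Dirichlet integral for $\zeta$ converges there; since $|\mathrm{d}M|\le \mathrm{d}N$, the Stieltjes integral $\widehat{M}(s)$ converges absolutely on the same half-plane, and the identity $\widehat{M}(s)\zeta(s)=1$ holds throughout $\Re s>1$ without any continuation argument. Second, the sentence ``the hypothesized right-hand zero forces $\zeta$ to be meromorphic on a right neighborhood of $1+it_{0}$'' is not accurate (the definition only demands the existence of a limit along the real ray, not meromorphy), but you never actually use meromorphy, so this is a cosmetic issue rather than a gap.
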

\begin{proof} We must have
$$
\frac{1}{\zeta(\sigma+it)}=s\!\int_{1}^{\infty} x^{-it-\sigma}
\frac{M(x)}{x}\:\mathrm{d}x= o\!\left(\frac{1}{\sigma-1}\right), \quad
\sigma\to 1^{+},  
$$
uniformly for $t$ on compact intervals.
\end{proof}

\begin{example}
\label{ex1C-M}
We consider
$$
\Pi (x)=\sum_{2^{k+1/2}\leq x} \frac{2^{k+1/2}}{k}\,.
$$
This satisfies the Chebyshev bounds: we have
\[
\psi(x)=  2^{\lfloor \log x/\log 2 + 1/2\rfloor +1/2}\log 2 +O(x/\log x).
\]
Thus
$$
\liminf_{x\to\infty} \frac{\psi(x)}{x}=\log 2 \quad \mbox{and} \quad
\limsup_{x\to\infty} \frac{\psi(x)}{x}=2 \log 2. 
$$
Further, the zeta function of this g-number system can be
explicitly computed:  
$$
\log \zeta(s)= 2^{-(s-1)/2}\sum_{k=1}^{\infty} \frac{2^{-k(s-1)}}{k} = - 2^{-(s-1)/2}\log(1-2^{-(s-1)}).
$$
We conclude that 
$$\zeta(\sigma)\sim \frac{1}{(\sigma-1)\log 2}, \quad \sigma\to1^{+};$$
therefore, by the Hardy-Littlewood-Karamata Tauberian Theorem
\cite{diamond-zhangbook}, \cite{korevaarbook}, $N$ has logarithmic density 
\begin{equation}
\label{log-densityex1}
\int_{1^{-}}^{x} \frac{\mathrm{d}N(u)}{u}\sim \frac{\log x}{\log 2}.
\end{equation}
Furthermore, $\zeta(s)$  has infinitely many right-hand zeros of order
1 at the points $s=1\pm i 2\pi (2n+1) /\log 2$, $n\in\mathbb{N}$, because
\[
\zeta\left(\sigma \pm i\frac{2\pi(2n+1)}{\log 2}\right)= \frac{1}{
  \zeta (\sigma)}\sim (\sigma-1)\log 2. 
\]
It follows, by Lemma \ref{lexC-M}, that
$$M(x)=\Omega(x).$$

To show the wobble of $F(x) = N(x)/x$, we apply an idea of Ingham
\cite{ingham1942} that is based on a finite form of the Wiener-Ikehara
method.  We use (essentially) the result given in \cite[Thm.~11.12]{BD04}.
For $\Re s > 0$,
\[
\int_1^\infty x^{-s-1} F(x)\,\mathrm{d}x = \frac{\zeta(s+1)}{s+1} = G(s).
\]
The discontinuities of $G(s)$ on the line segment $(-8\pi i / \log 2,
8\pi i / \log 2)$, which provide a measure of the wobble, occur at
$s=0$ and $s = \pm 4\pi i / \log 2$.

We analyze the behavior of $G$ near these points.  Let $s$ be a
complex number with $\Re s \ge 0$ (to avoid any logarithmic fuss)
and $0 <|s| \le 1/2$ (so $\log s \ll \log |1/s|$ is valid).
 For $n = 0, \, \pm 1$, a small calculation shows that
\[
\zeta(1+ s + 4 \pi n i/\log 2) = \frac{1}{s \log 2}
+ O(\log |1/s|).
\]
For $n = -1,\,0, \, 1$ set
\[
\gamma_n =  4 \pi n/\log 2, \quad \  \a_n = 1/(\log 2+ 4 \pi in).
\]
Take $T$ a number between $4\pi/\log 2$ and $8\pi/\log 2$, e.g.~$T=36$,
and set
\[
G^*(s) =  \sum_{-1\le n \le 1}\frac{\a_{n}}{s-i \gamma_n}, \quad 
F_T^*(u) = \sum_{-1\le n \le 1}\!\!\a_{n}\Big(1 - \frac{|\gamma_{n}|}{T}
\,\Big)\, e^{i \gamma_nu}.
\]

Now $G - G^*$ has a continuation to the closed strip $\{s\!: \sigma
\ge 0, \, |t| \le T\}$ as a function that is continuous save
logarithmic singularities at $\g_{-1},\,\g_0,\,\g_1$.  In particular,
$G - G^*$ is integrable on the imaginary segment $(-iT, \, iT)$.  The
result in \cite{BD04} is stated for an extension that is continuous at
all points of such an interval, but integrability is a sufficient 
condition for the result to hold.

We find that 
\[
\liminf_{u \to \infty} F(u) \le \inf_u F_T^*(u) < \sup_uF_T^*(u) 
\le \limsup_{u \to \infty} F(u).
\]
By a little algebra,
\begin{align*}
\sup_uF_T^*(u) &= \frac 1{\log 2} + \frac 2{|\log 2 - 4 \pi i|}\Big(1 -
\frac{4 \pi}{36 \log 2}\Big) > 1.52, \\
\inf_uF_T^*(u) &= \frac 1{\log 2} - \frac 2{|\log 2 - 4 \pi i|}\Big(1 -
\frac{4 \pi}{36 \log 2}\Big) < 1.37.
\end{align*}
Thus $N(x)/x$ has no asymptote as $x \to \infty$.

It is interesting to note that $\zeta$ is $4\pi i/\log 2$ periodic.
So $\zeta(s)$ has an analytic continuation to $\{s\!: \Re s=1,\,
s\neq1+4n\pi i/\log 2,\: n\in\mathbb{N}\}$. One could show a larger
oscillation by a more elaborate analysis exploiting additional
singularities.
\medskip

Finally, we discuss $m(x)=\int_{1^{-}}^{x}u^{-1} \mathrm{d}M(u)$ for
this example.  Since $M(x)=\Omega(x)$, we necessarily have
$m(x)=\Omega(1)$. We will prove that
\begin{equation}
\label{m O-bound}
m(x)=O(1).
\end{equation}
This shows that, in general, having Chebyshev bounds, log-density
(\ref{log-density}), $N(x)\ll x $, and (\ref{m O-bound}) together do
not suffice to deduce the estimate $M(x)=o(x)$.

To prove (\ref{m O-bound}), we first need to improve (\ref{log-densityex1}) to  
\begin{equation}
\label{2log-densityex1}
\int_{1^{-}}^{x} \frac{\mathrm{d}N(u)}{u}= \frac{\log x}{\log 2}
-\frac{\log \log x}{2}+O(1). 
\end{equation}
 This estimate can be shown by applying a Tauberian theorem of
 Ingham-Fatou-Riesz type \cite{debruyne-vindasCT}
 (cf.~\cite{ingham1935,korevaarbook}). In fact, the Laplace transform
 of the non-decreasing function $\tau_{1}(x)=\int_{1^{-}}^{e^{x}}
 u^{-1} \mathrm{d}N(u)$ is analytic on $\Re s>0$ and 
\begin{align*}
G(s)&=\mathcal{L}\{\tau_{1}; s\}- \frac{1}{s^{2}\log 2} 
+\frac{\log (1/s)}{2s}=\frac{\zeta(s+1)}{s}-\frac{1}{s^{2}\log 2} 
+\frac{\log (1/s)}{2s} \\ 
&
= \frac{1+\log\log2}{2s}+O(\log^{2} |1/s|), \quad |s|<1/2,
\end{align*}
as a small computation shows. In the terminology of
\cite{debruyne-vindasCT}, $G(s)$ has local pseudo\-measure boundary
behavior on the imaginary segment $(-i/2,i/2)$; in fact, its boundary
value on that segment is the sum of a pseudomeasure and a locally
integrable function.  One then deduces (\ref{2log-densityex1})
directly from \cite[Thm. 3.7]{debruyne-vindasCT}.  
Similarly, we use the fact that $(s\,\zeta(s+1))^{-1}$ has a continuous
extension to the same imaginary segment and we apply the same
Tauberian result to the non-decreasing function
$\tau_{2}(x)=\int_{1^{-}}^{e^{x}} u^{-1}
(\mathrm{d}M(u)+\mathrm{d}N(u))$, whose Laplace transform is   
$\mathcal{L}\{\tau_{2}; s\}= \mathcal{L}\{\tau_{1}; s\}+(s\zeta(s+1))^{-1}.$
The conclusion is again the asymptotic formula $\tau_{2}(\log x)= \log
x/\log 2-(\log \log x)/2+O(1)$.  One then obtains (\ref{m O-bound}) upon
subtracting  (\ref{2log-densityex1}) from this formula. 
\end{example}

\begin{example}
\label{ex2C-M} As a second example, we consider a modification of the
Beurling-Diamond examples from \cite{beurling1937,diamond1970} (see
also \cite{d-s-v}), namely, the continuous prime measure
$\mathrm{d}\Pi_{B}$ given by 
$$
\Pi_{B}(x)= \int_{1}^{x}\frac{1-\cos (\log u)}{\log u}\: \mathrm{d}u  
$$
and the discrete g-prime system 
$$q_{k}=\Pi^{-1}_{B}(k), \quad k=1,2,\dots,$$
with g-prime and g-integer counting functions $\pi_{D}(x)$ and $N_{D}(x)$.

We study here the continuous prime measure
$\mathrm{d}\Pi_{C}=2\mathrm{d}\Pi_{B}$ and the discrete $g$-primes
formed by taking each $q_{k}$ twice, that is, the g-prime system  
$$\mathcal{P}= \{q_{1},\, q_{1},\, q_{2},\, q_{2},\, q_{3},\,
q_{3},\,\dots\}.$$ 
The associated number-theoretic functions will be denoted as
$\pi_{\mathcal{P}}(x)$, $\Pi_{\mathcal{P}}(x)$, $N_{\mathcal{P}}(x)$,
$M_{\mathcal{P}}(x)$, and $\zeta_{\mathcal{P}}(s)$, and those
corresponding to $\mathrm{d}\Pi_{C}$ we denote by $N_{C}(x)$, $M_{C}(x)$, and
$\zeta_{C}(s)$.

It is easy to verify that 
\begin{equation*}
\Pi_{C}(x)=  \frac{x}{\log x}\left(2-\sqrt{2}\cos \left(
\log x-\frac{\pi}{4}\right)\right) +O\left(\frac{x}{\log^{2} x}\right)
\end{equation*}
and, since $\pi_{\mathcal{P}}(x)=2\pi_{D}(x)= 2\lfloor\Pi_{B}(x)\rfloor= \Pi_{C}(x)+ O(1)$,
\begin{equation}
\label{asympdiamondexeq1}
\pi_{P}(x)=\frac{x}{\log x}\left(2-\sqrt{2}\cos \left(
\log x-\frac{\pi}{4}\right)\right)  +O\left(\frac{x}{\log^{2}
x}\right) ,
\end{equation}
whence both $\Pi_{C}$ and $\pi_{P}$ satisfy lower and upper Chebyshev
bounds. The zeta function $\zeta_{C}(s)$ can be explicitly computed:
$$
\log\zeta_{C}(s)= -2\log(s-1)+\log (s-1-i)+\log(s-1+i),
$$
and so
$$
\zeta_{C}(s)= \frac{(s-1)^{2}+1}{(s-1)^{2}}= 1+\frac{1}{(s-1)^{2}}\,.
$$

Now $\zeta_{C}(s)$ has right-hand zeros of order 1 located at $1\pm i$,
and so Lemma \ref{lexC-M} implies that $M_{C}(x)=\Omega(x)$. For the
discrete g-number system, we have
$$
\zeta_{\mathcal{P}}(s)=\zeta_{C}(s)\exp(G(s)),
$$
where $G(s)=\int_{1}^{\infty}x^{-s}\mathrm{d}(\Pi_{\mathcal{P}}-\Pi_{C})(x)$ is
analytic on $\Re s>1/2$ because 
$$\Pi_{\mathcal{P}}(x)=\pi_{\mathcal{P}}(x)+O(\sqrt x)=\Pi_{C}(x)+O(\sqrt x).$$
Thus, the same argument yields $M_{\mathcal{P}}(x)=\Omega(x)$.
 
Note that $\zeta_{C}$ is the Mellin transform of the measure
$\mathrm{d}N_{C}= \delta_{1}+\log u\, \mathrm{d}u$, and therefore we
have the exact formula
$$
N_{C}(x)= x\log x- x+2, \quad x\geq 1.
$$
We now show that $N_{\mathcal{P}}$ satisfies a lower bound of a
similar type.   It is well known \cite{d-s-v,diamond1970} that  
$N_{D}(x)= cx +O(x\log^{-3/2}x)$
with $c>0$. Thus $N_D(x) \ge c'x$ for some $c'>0$ and all 
$x\ge 1$ and so
\begin{align*}
N_{\mathcal{P}}(x) &= \int_{1}^x \mathrm{d}N_D*\mathrm{d}N_D
= \int_{1}^x N_D(x/t)\,\mathrm{d}N_D(t) 
\ge \int_{1}^x c'x/t \,\mathrm{d}N_D(t) \\
&\ge c'\!\!\int_{1}^x N_D(x/t)\,\mathrm{d}t
\ge c'^{\, 2}\!\!\int_{1}^x x/t \,\mathrm{d}t = c'^{\, 2}x \log x \ne O(x).
\end{align*}

Applying the Dirichlet hyperbola method, one can actually obtain the
sharper asymptotic estimate
\begin{equation}
\label{eqasympex2}
N_{\mathcal{P}}(x)=c^{2} x\log x+ bx+ O\left(\frac{x}{\log^{1/2}x}\right)
\end{equation}
with certain constants $b, \, c \in\mathbb{R}$. We leave the
verification of (\ref{eqasympex2})  to the reader. 
\end{example} 
 
\begin{example}  \label{ex:3C-M}
There exists a non-decreasing function $N$ on $[1, \, \infty)$ for which
\[
\int_{1}^{\infty} \frac{|N(x)-x|}{x^{\sigma + 1}}\,
\mathrm{d}x \ll (\sigma -1)^{-1/3}\, \tag{\ref{L1cond} bis}.
\]
holds for $1 < \sigma < 2$, but  $\limsup_{x \to \infty} N(x)/x = \infty$.
\end{example}

To see this, set $f(n) = e^{e^n}$ and take
\[
N(x) = \begin{cases}
x, \ x \ge 1, & x \not\in \displaystyle\bigcup_{m=1}^\infty \big[f(m),
\,e^{m/3}f(m)\big] \\
e^{n/3}f(n), & f(n) \le x \le e^{n/3}f(n).
\end{cases}
\]
Clearly, $N\big(e^{e^n}\big)/ e^{e^n} = e^{n/3} \to \infty.$

On the other hand, for each $n$ we have
\begin{align*}
\int_{f(n)}^{\,e^{n/3}f(n)} \frac{|N(x)-x|}{x^{\sigma + 1}}\,\mathrm{d}x 
&<e^{n/3}f(n)\int_{f(n)}^{\,e^{n/3}f(n)} \frac{\mathrm{d}x}{x^{\sigma + 1}} \\
&< e^{n/3}f(n)\int_{f(n)}^\infty \frac{\mathrm{d}x}{x^{\sigma + 1}}
< e^{n/3} f(n)^{-(\sigma -1)}.
\end{align*}
Instead of summing $ e^{n/3} f(n)^{-(\sigma -1)}$, we calculate the
corresponding integral:  
\[
\int_0^\infty e^{u/3} e^{-(\sigma -1) e^u} \mathrm{d}u 
= \int_0^\infty v^{1/3} e^{-(\sigma -1) v} \frac{\mathrm{d}v}{v} =\Gamma(1/3)  
(\sigma -1)^{-1/3}.
\]
Also, $\{e^{n/3} f(n)^{-(\sigma -1)}\}$ is a unimodal sequence whose maximal
term is of size at most $(3e(\sigma -1))^{-1/3} \ll (\sigma-1)^{-1/3}$. 
Thus (\ref{L1cond} bis) holds.

\

\end{document}